\newcommand{\ds}{\displaystyle}
\newcommand{\N}{\mathbb{N}}
\newcommand{\Z}{\mathbb{Z}}
\newcommand{\R}{\mathbb{R}}
\newcommand{\EE}{\mathcal E}
\newcommand{\beq}{\begin{eqnarray}}
\newcommand{\eeq}{\end{eqnarray}}
\newcommand{\beqs}{\begin{eqnarray*}}
\newcommand{\eeqs}{\end{eqnarray*}}
\newtheorem{theorem}{Theorem}
\newtheorem{proposition}{Proposition}
\newtheorem{lemma}{Lemma}
\newtheorem{corollary}{Corollary}
\theoremstyle{definition}
\theoremstyle{remark}
\newtheorem{remark}{Remark}
\begin{document}
\title[Projective description of $\mathcal{E}^{\{\mathfrak{M}\}}(\Omega)$]{On the projective description of spaces of ultradifferentiable functions of Roumieu type}

\author[A. Debrouwere]{Andreas Debrouwere}
\thanks{A. Debrouwere was supported by  FWO-Vlaanderen through the postdoctoral grant 12T0519N}
\address{A. Debrouwere, Department of Mathematics: Analysis, Logic and Discrete Mathematics\\ Ghent University\\ Krijgslaan 281\\ 9000 Gent\\ Belgium}
\email{andreas.debrouwere@UGent.be}

\author[B. Prangoski]{Bojan Prangoski}
\thanks{B. Prangoski was partially supported by the Macedonian Academy of Sciences and Arts through the project ``Microlocal analysis and Applications''}
\address{B. Prangoski, Faculty of Mechanical Engineering\\ University Ss. Cyril and Methodius \\ Karpos II bb \\ 1000 Skopje \\ Macedonia}
\email{bprangoski@yahoo.com}

\author[J. Vindas]{Jasson Vindas}
\thanks {J. Vindas was supported by Ghent University through the BOF-grants 01J11615 and 01J04017.}
\address{J. Vindas, Department of Mathematics: Analysis, Logic and Discrete Mathematics\\ Ghent University\\ Krijgslaan 281\\ 9000 Gent\\ Belgium}
\email{jasson.vindas@UGent.be}

\subjclass[2010]{46E10, 46F05.}
\keywords{Ultradifferentiable classes of Roumieu type; Projective description.}
\begin{abstract}
We provide a projective description of the space $\mathcal{E}^{\{\mathfrak{M}\}}(\Omega)$  of ultradifferentiable functions of Roumieu type, where $\Omega$ is an arbitrary open set in $\R^d$ and $\mathfrak{M}$ is a weight matrix satisfying the analogue of Komatsu's condition $(M.2)'$. In particular, we obtain in a unified way projective descriptions of ultradifferentiable classes defined via a single weight sequence (Denjoy-Carleman approach) and via a weight function (Braun-Meise-Taylor approach) under considerably weaker assumptions  than in earlier versions of these results.

\end{abstract}
\maketitle
\section{Introduction}
In his seminal work \cite{Komatsu3}, Komatsu gave an explicit system of seminorms generating the topology of the space $\EE^{\{M\}}(\Omega)$ of ultradifferentiable functions of Roumieu type (for short, a projective description of $\EE^{\{M\}}(\Omega)$), where $\Omega$ is an arbitrary open set of $\R^d$ and  $M$ is a non-quasianalytic weight sequence satisfying the conditions $(M.1)$ and $(M.2)'$ \cite[Proposition 3.5]{Komatsu3}. In \cite[Proposition 4.8]{D-V}, the first and the third authors relaxed the non-quasianalyticity assumption on $M$ to $\sup_{p \in \N} p M_p^{-1/p} < \infty$. Similarly, a projective description of the space $\EE^{\{\omega\}}(\Omega)$ was implicitly given in \cite[Section 3]{H-M}, where $\omega$ is a weight function in the sense of Braun, Meise and Taylor \cite{B-M-T} that satisfies $\omega(t) = O(t)$. Projective descriptions are indispensable in the study of spaces of vector-valued ultradifferentiable functions of Roumieu type \cite{Komatsu3, D-V}, e.g., for achieving completed tensor product representations of such spaces. 

The goal of this article is to provide a projective description of spaces of ultradifferentiable functions of Roumieu type defined via a weight matrix \cite{R-S}. This approach leads to a unified treatment of ultradifferentiable classes defined via a single weight sequence  and via a weight function, but also comprises other spaces, e.g., the union of all Gevrey spaces. For the two standard classes we obtain projective descriptions under  much weaker assumptions  than in the above mentioned works; see Corollary \ref{cor-1} and Corollary \ref{cor-2}.

\section{Spaces of ultradifferentiable functions of Roumieu type}
Let $M= (M_p)_{p \in \N}$ be a sequence of positive numbers (a \emph{weight sequence}). We consider the following three conditions on $M$:
\begin{itemize}
\item[$(M.0)\:$]  $M_p \geq ch^p$, $p \in \N$, for some $c,h > 0$;
\item[$(M.1)\:$]  $M_{p}^{2} \leq M_{p-1} M_{p+1}$, $p \in\Z_+$;
\item[$(M.2)'$]  $\ds M_{p+1} \leq CH^{p} M_p$, $p\in \N$, for some $C,H > 0$.
\end{itemize}
The conditions $(M.1)$ and $(M.2)'$ are denoted by $(M_{\operatorname{lc}})$ and $(M_{\operatorname{dc}})$, respectively, in \cite{R-S}. We use here the standard notation from \cite{Komatsu}.   The relation $M \subset N$ between two weight sequences means that there are $C,h>0$ such that $M_p\leq Ch^{p}N_{p}$ for all $p\in\N$.  The stronger relation $M \prec N$ means that the latter inequality remains valid for every $h>0$ and a suitable $C=C_{h}>0$. We set $M_\alpha = M_{|\alpha|}$, $\alpha \in \N^d$.

For $h > 0$ and a regular compact set $K \Subset \R^d$ (i.e., $K=\overline{\mathrm{int}\, K}$) we write $\mathcal{E}^{M,h}(K)$ for the Banach space consisting of all $\varphi \in C^\infty(K)$\footnote{We define $C^\infty(K)$ as the space consisting of all $\varphi \in C^\infty(\operatorname{int} K)$ such that $\partial^{\alpha}\varphi$ extends to a continuous function on $K$ for each $\alpha \in \N^d$.} such that
$$
\|\varphi\|_{\mathcal{E}^{M,h}(K)}:=\sup_{\alpha\in\N^d}\frac{h^{|\alpha|}\|\partial^{\alpha}\varphi\|_{L^{\infty}(K)}}{M_{\alpha}}<\infty.
$$
We set
$$
\EE^{\{M\}}(K):= \varinjlim_{h \to 0^+} \EE^{M,h}(K).
$$
Given an open set $\Omega \subseteq \R^d$, we define the space of ultradifferentiable functions of Roumieu type (of class $\{M\}$) on $\Omega$ as 
\beqs
\EE^{\{M\}}(\Omega):=\varprojlim_{K\Subset \Omega} \EE^{\{M\}}(K).
\eeqs

Next, we introduce weight matrices and the associated spaces of ultradifferentiable functions \cite{R-S}. A \emph{weight matrix} is a sequence $\mathfrak{M} = (M^n)_{n \in \N}$ consisting of weight sequences $M^n$ such that $M^n \leq M^{n+1}$ for all $n \in \N$. We consider the following condition on $\mathfrak{M}$: 
 \begin{itemize}
\item[$\{\mathfrak{M}.2\}'$] $\forall n \in \N ~ \exists m \in \N ~ \exists C,H >0 ~ \forall p  \in \N \, : \, M_{p+1}^{n} \leq CH^{p} M_{p}^m$.
	\end{itemize}
The condition $\{\mathfrak{M}.2\}'$ is denoted by  $(\mathfrak{M}_{\{\operatorname{dc}\}})$ in \cite{R-S}. Given a regular compact set $K \Subset \R^d$, we denote
$$
\EE^{\{\mathfrak{M}\}}(K):= \varinjlim_{n \in \N} \EE^{\{M_n\}}(K).
$$
Given an open set $\Omega \subseteq \R^d$, we define the space of ultradifferentiable functions of Roumieu type (of class $\{\mathfrak{M}\}$) on $\Omega$ as 
\beqs
\EE^{\{\mathfrak{M}\}}(\Omega):=\varprojlim_{K\Subset \Omega} \EE^{\{\mathfrak{M}\}}(K).
\eeqs

Finally, we introduce spaces of ultradifferentiable functions defined via a weight function in the sense of Braun, Meise and Taylor \cite{B-M-T} and explain how they fit into the weight matrix approach; see \cite[Section 5]{R-S} for more details. By a \emph{weight function} we mean a continuous increasing function $\omega: [0,\infty) \rightarrow [0,\infty)$ with $\omega_{|[0,1]} \equiv 0$ and satisfying the following three properties: 
	\begin{itemize}
		\item[$(\alpha)$] $\omega(2t) = O(\omega(t))$;
		\item[$(\gamma_0)$] $\log t = o(\omega(t))$;
		\item[$(\delta) $] $\phi = \phi_\omega: [0, \infty) \rightarrow [0, \infty)$, given by $\phi(x) = \omega(e^{x})$, is convex.
	\end{itemize}
Note that $\phi^{*}$ is increasing and convex, $\phi^*(0) = 0$, $(\phi^*)^* = \phi$, $\phi^*(y)/y$ is increasing on $[0,\infty)$ and $\phi^*(y)/y \rightarrow \infty$ as $y \to \infty$. For $\rho > 0$ and a regular compact set $K \Subset \R^d$ we write $\mathcal{E}^{\omega,\rho}(K)$ for the Banach space consisting of all $\varphi \in C^\infty(K)$ such that
$$
\|\varphi\|_{\mathcal{E}^{\omega,\rho}(K)}:=\sup_{\alpha\in\N^d} \|\partial^{\alpha}\varphi\|_{L^{\infty}(K)} \exp\left(-\frac{1}{\rho}\phi^*(\rho|\alpha|) \right)<\infty.
$$

Given an open set $\Omega \subseteq \R^d$, we define the space of ultradifferentiable functions of Roumieu type (of class $\{\omega\}$) on $\Omega$ as 
\beqs
\EE^{\{\omega\}}(\Omega):=\varprojlim_{K\Subset \Omega} \varinjlim_{\rho \to \infty} \EE^{\omega,\rho}(K).
\eeqs
We associate to $\omega$ to the weight matrix $\mathfrak{M}_{\omega} = (M_{\omega}^{n})_{n \in \N}$, where the weight sequence $M_\omega^{n}$ is defined as
$$
M^n_{\omega,p} := \exp\left(\frac{1}{n}\phi^*(np) \right), \qquad p \in \N.
$$
Note that each $M_\omega^{n}$ satisfies $(M.0)$ and $(M.1)$. Furthermore, $\mathfrak{M}_{\omega}$ satisfies $\{\mathfrak{M}.2\}'$  and  $\EE^{\{\omega\}}(\Omega) = \EE^{\{\mathfrak{M}_{\omega}\}}(\Omega)$ as locally convex spaces \cite[Corollary 5.15]{R-S}.

\section{Projective description of $\mathcal{E}^{\{\mathfrak{M}\}}(\Omega)$}\label{apptopol}
Given a weight matrix $\mathfrak{M}$, we define $V(\mathfrak{M})$ as the set of all those weight sequences $N$ such that  $M^n \prec N$ for all $n \in \N$. The next theorem is the main result of this article.
\begin{theorem}\label{main}
Let $\Omega \subseteq \R^d$ be open and let $\mathfrak{M}$ be a weight matrix satisfying $\{\mathfrak{M}.2\}'$. A function $\varphi \in C^\infty(\Omega)$ belongs to $\EE^{\{\mathfrak{M}\}}(\Omega)$ if and only if
$$
\| \varphi\|_{\mathcal{E}^{N,1}(K)} = \sup_{\alpha\in\N^d}\frac{\|\partial^{\alpha}\varphi\|_{L^{\infty}(K)}}{N_{\alpha}}<\infty
$$
for all $K \Subset \Omega$ and $N \in V(\mathfrak{M})$. Moreover,  the locally convex topology of $\EE^{\{\mathfrak{M}\}}(\Omega)$ is generated by the system of seminorms $\{ \| \, \cdot \, \|_{\mathcal{E}^{N,1}(K)} \, | \, K \Subset \Omega, N \in V(\mathfrak{M})\}$.
\end{theorem}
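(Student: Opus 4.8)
The plan is to prove the two directions separately and then deduce the topological statement from a closed graph / inductive-limit argument. Throughout, the key structural fact is that $\EE^{\{\mathfrak M\}}(K)=\varinjlim_n\EE^{\{M^n\}}(K)$ is a countable inductive limit of Banach spaces, and that, by a classical Grothendieck-type factorization lemma (Komatsu's lemma, or the Dixmier--Malliavin-style argument), the $(LB)$-space structure together with condition $\{\mathfrak M.2\}'$ forces such limits to be ``regular enough'' that boundedness on all the $V(\mathfrak M)$-seminorms can be upgraded to membership in one of the steps.

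\textbf{Easy direction.} Suppose $\varphi\in\EE^{\{\mathfrak M\}}(\Omega)$. Fix $K\Subset\Omega$; then $\varphi\in\EE^{\{M^n\}}(K)$ for some $n$, i.e.\ there are $h,C>0$ with $\|\partial^\alpha\varphi\|_{L^\infty(K)}\le C h^{-|\alpha|}M^n_\alpha$ for all $\alpha$. Now take any $N\in V(\mathfrak M)$. Since $M^n\prec N$, for the particular value $h$ above there is $C_h>0$ with $M^n_p\le C_h\,h^{p}N_p$ for all $p$, whence $\|\partial^\alpha\varphi\|_{L^\infty(K)}\le CC_h\,N_\alpha$, so $\|\varphi\|_{\EE^{N,1}(K)}<\infty$. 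This requires nothing beyond the definition of $\prec$ and of $V(\mathfrak M)$.

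\textbf{Hard direction.} Conversely, assume $\|\varphi\|_{\EE^{N,1}(K)}<\infty$ for every $K\Subset\Omega$ and every $N\in V(\mathfrak M)$; we must produce, for each fixed $K$, an index $n$ and constants $C,h$ with $\|\partial^\alpha\varphi\|_{L^\infty(K)}\le Ch^{-|\alpha|}M^n_\alpha$. Fix $K$ and set $c_p:=\sup_{|\alpha|=p}\|\partial^\alpha\varphi\|_{L^\infty(K)}$. The hypothesis says: for every weight sequence $N$ with $M^n\prec N$ for all $n$, one has $\sup_p c_p/N_p<\infty$. The heart of the matter is a purely sequence-theoretic statement: if $(c_p)$ is dominated by every such common majorant $N$, then $(c_p)$ is already dominated (up to a geometric factor) by some $M^n$. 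I would prove the contrapositive by an explicit diagonal construction: assuming $c_p/M^n_p$ is unbounded for every $n$, pick for each $n$ an index $p_n$ (strictly increasing, growing fast) with $c_{p_n}\ge n^{p_n}M^n_{p_n}$, and then build a weight sequence $N$ interpolating between the $M^n$'s --- roughly, $N_p$ should look like $k^{-p}M^k_p$ on the block of indices $p_{k}\le p<p_{k+1}$ --- arranged so that (i) $N$ is itself a weight sequence (here one needs to take log-convex regularizations, using $(M.1)$ for the $M^n$, or rather the fact that each $M^n$ satisfies $(M.1)$; if $(M.1)$ is not assumed for the steps, one works with the associated functions), (ii) $M^n\prec N$ for every $n$ (this is where the fast growth of $p_n$ and the matrix monotonicity $M^n\le M^{n+1}$ enter), and yet (iii) $c_{p_n}/N_{p_n}\to\infty$, contradicting the hypothesis. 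Condition $\{\mathfrak M.2\}'$ is used to guarantee that shifting indices and taking log-convex minorants stays inside the scale $(M^n)_n$ --- this is exactly the role $(M.2)'$ plays in Komatsu's original argument, and it is what allows the construction to close up. This diagonalization is the main obstacle; it is the analogue of the argument behind \cite[Proposition 3.5]{Komatsu3} and \cite[Proposition 4.8]{D-V}, now carried out in the weight-matrix setting.

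\textbf{Topological statement.} Once the set equality is established, write $\EE^{\{\mathfrak M\}}(\Omega)$ as $\varprojlim_{K}\varinjlim_n\EE^{\{M^n\}}(K)$ and let $F(\Omega)$ denote the same underlying space equipped with the (coarser, by the easy direction) topology generated by $\{\|\cdot\|_{\EE^{N,1}(K)}\}$. The identity map $\EE^{\{\mathfrak M\}}(\Omega)\to F(\Omega)$ is continuous. To get continuity of the inverse it suffices, since both are projective limits over $K\Subset\Omega$, to show that for fixed $K$ the seminorm $\|\cdot\|_{\EE^{\{M^n\}}(K)}$-inducing topology on $\EE^{\{\mathfrak M\}}(K)$ is weaker than the topology induced by the family $\{\|\cdot\|_{\EE^{N,1}(K')}: K'\Subset\Omega, N\in V(\mathfrak M)\}$; equivalently, that every continuous seminorm on the $(LB)$-space $\varinjlim_n\EE^{\{M^n\}}(K)$ is dominated by finitely many $\|\cdot\|_{\EE^{N,1}(K)}$. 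Here I would invoke the standard $(LB)$-space machinery: condition $\{\mathfrak M.2\}'$ ensures (as in \cite{R-S}) that the inductive spectrum is ``regularly decreasing'' / the limit is complete and even a $(PLB)$-space, and a Grothendieck factorization argument --- combined with the sequence lemma above applied uniformly --- upgrades the pointwise domination to the domination of seminorms. Concretely, given a bounded set $B$ in $F(\Omega)$, each $c_p^{(\varphi)}$ is uniformly bounded by $N_p$ for all $N\in V(\mathfrak M)$ and all $\varphi\in B$, and a uniform version of the diagonal construction (now producing a single $N$ that works against the whole family of ``bad'' directions) shows $B$ lands in a bounded subset of some step $\EE^{\{M^n\}}(K)$; hence $F(\Omega)$ and $\EE^{\{\mathfrak M\}}(\Omega)$ have the same bounded sets, and since $\EE^{\{\mathfrak M\}}(\Omega)$ is, under $\{\mathfrak M.2\}'$, a complete (webbed) space, an application of the closed graph theorem for the identity $F(\Omega)\to\EE^{\{\mathfrak M\}}(\Omega)$ finishes the proof. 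The technical care needed to run the closed graph / bornological argument cleanly, and in particular to make the ``uniform'' diagonalization rigorous, is the second place where real work is required.
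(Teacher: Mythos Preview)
Your treatment of the set-theoretic equality is essentially correct and matches the paper: the diagonal construction you sketch is exactly the paper's Lemma~\ref{algebraic}$(i)$. You overcomplicate it, though. No log-convex regularization is needed, and neither $(M.1)$ nor $\{\mathfrak{M}.2\}'$ is used at this stage; one simply sets $N_p := n^{p} M^n_p$ on the block $p_n \le p < p_{n+1}$ and checks directly that $N \in V(\mathfrak{M})$ (a weight sequence here is merely a sequence of positive reals, so log-convexity is not part of the definition).

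The topological argument, however, has a genuine gap. Showing that $F(\Omega)$ and $\EE^{\{\mathfrak{M}\}}(\Omega)$ have the same bounded sets is \emph{not} enough to conclude that their topologies agree: the weak topology on any infinite-dimensional reflexive space has the same bounded sets as the original topology yet is strictly coarser. Your invocation of the closed graph theorem for $\mathrm{id}\colon F(\Omega) \to \EE^{\{\mathfrak{M}\}}(\Omega)$ would require $F(\Omega)$ to be ultrabornological, but you give no argument for this, and since the defining family of seminorms is uncountable there is no evident reason it should hold. Equivalently, upgrading ``same bounded sets'' to ``same topology'' would require the projective space $F(K)$ itself to be bornological, which is precisely the nontrivial point you have not addressed.

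The paper bypasses this by working on the dual side. For a regular compact $K$ with Lipschitz interior, $\EE^{\{\mathfrak{M}\}}(K)$ is a $(DFS)$-space, and the paper proves (Proposition~\ref{vstkln135}) that every equicontinuous set $B \subset (\EE^{\{\mathfrak{M}\}}(K))'$ admits a uniform representation $T = \sum_\alpha (-1)^{|\alpha|}\partial^\alpha F_{\alpha,T}$ with $F_{\alpha,T} \in L^2(K)$ and $\sup_{T\in B}\sup_\alpha \|F_{\alpha,T}\|_{L^2(K)} N_\alpha \le C$ for a \emph{single} $N \in V(\mathfrak{M})$. This is obtained by showing that the map $(F_\alpha)_\alpha \mapsto \sum (-1)^{|\alpha|}\partial^\alpha F_\alpha$ from a Fr\'echet sequence space onto $(\EE^{\{\mathfrak{M}\}}(K))'_\beta$ is surjective---its transpose $\varphi \mapsto (\partial^\alpha \varphi)_\alpha$ is injective with weak-$\ast$ closed range, the latter via Sobolev embedding, and it is in absorbing the finite Sobolev shift that $\{\mathfrak{M}.2\}'$ actually enters. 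A lifting-of-bounded-sets result for surjections onto Fr\'echet--Montel spaces, together with Lemma~\ref{algebraic}$(ii)$, then produces the uniform $N$, and the seminorm estimate $q(\varphi) \le C'\|\varphi\|_{\mathcal{E}^{L,1}(K)}$ follows at once by Cauchy--Schwarz. This dual-side argument sidesteps the bornologicity question entirely.
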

\begin{remark}\label{main-remark}
Let  $\mathfrak{M}$ be a weight matrix satisfying $\{\mathfrak{M}.2\}'$ and suppose that each weight sequence $M^n$ satisfies $(M.0)$ and $(M.1)$. Obviously, every element of $V(\mathfrak{M})$ automatically satisfies $(M.0)$. Define $V^*(\mathfrak{M})$  as the set of all $N\in V(\mathfrak{M})$ for which $(M.1)$ holds. Then, Theorem \ref{main} still holds true if we replace  $V(\mathfrak{M})$ by $V^*(\mathfrak{M})$. This follows from the fact that for each $N \in V(\mathfrak{M})$ its log-convex minorant $N^c$ belongs to $V^*(\mathfrak{M})$ and satisfies $N^c \leq N$.
\end{remark}
Before we prove Theorem \ref{main}, let us show how it entails the projective description of the spaces $\mathcal{E}^{\{M\}}(\Omega)$ and $\mathcal{E}^{\{\omega\}}(\Omega)$. Following Komatsu \cite{Komatsu3}, we denote by $\mathfrak{R}$ the family of all non-decreasing sequences $r = (r_j)_{j \in \N}$ of positive numbers such that $r_j \to \infty$ as $j \to \infty$. The next result generalizes \cite[Proposition 3.5]{Komatsu3} and \cite[Proposition 4.8]{D-V}. 

\begin{corollary}\label{cor-1}
Let $\Omega \subseteq \R^d$ be open and let $M$ be a weight sequence satisfying $(M.2)'$.  A function $\varphi \in C^\infty(\Omega)$ belongs to $\EE^{\{M\}}(\Omega)$ if and only if
$$
\| \varphi\|_{\mathcal{E}^{M,r}(K)} := \sup_{\alpha\in\N^d}\frac{\|\partial^{\alpha}\varphi\|_{L^{\infty}(K)}}{M_{\alpha} \prod_{j = 0}^{|\alpha|} r_j}<\infty
$$
for all $K \Subset \Omega$ and $r \in \mathfrak{R}$. Moreover,  the locally convex topology of $\EE^{\{M\}}(\Omega)$ is generated by the system of seminorms $\{ \| \, \cdot \, \|_{\mathcal{E}^{M,r}(K)} \, | \, K \Subset \Omega, r \in \mathfrak{R}\}$.
\end{corollary}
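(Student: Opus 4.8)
The plan is to deduce Corollary \ref{cor-1} from Theorem \ref{main} by applying it to the weight matrix $\mathfrak{M}_M = (M^n)_{n \in \N}$ defined by $M^n_p := M_p n^p$ (equivalently $M^n_p = M_p \prod_{j=0}^{p} s_j$ for the constant sequence $s_j = n$, up to a harmless factor), which generates the same space $\EE^{\{M\}}(\Omega) = \EE^{\{\mathfrak{M}_M\}}(\Omega)$ as locally convex spaces. One first checks that $\mathfrak{M}_M$ inherits $\{\mathfrak{M}.2\}'$ from $(M.2)'$: indeed $M^n_{p+1} = M_{p+1}n^{p+1} \leq CH^p M_p n^{p+1} = C n (Hn)^p M_p \leq C n (M_p m^p) = C' M^m_p$ once $m \geq Hn$, which is exactly the required form. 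Since each $M^n$ trivially satisfies $(M.0)$ as soon as $M$ does (and $(M.2)'$ forces $M_p \geq c h^p$ after passing to an equivalent sequence, or one simply works with the seminorms directly without $(M.0)$), Theorem \ref{main} applies and identifies $\EE^{\{M\}}(\Omega)$ as the space of $\varphi \in C^\infty(\Omega)$ with $\|\varphi\|_{\EE^{N,1}(K)} < \infty$ for all $K \Subset \Omega$ and all $N \in V(\mathfrak{M}_M)$, topologized by the corresponding seminorms.

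The remaining, and main, task is therefore a purely combinatorial translation: to show that the family of seminorms $\{\|\cdot\|_{\EE^{N,1}(K)} : N \in V(\mathfrak{M}_M)\}$ is equivalent (generates the same topology, hence in particular the same bounded-ness condition) to the family $\{\|\cdot\|_{\EE^{M,r}(K)} : r \in \mathfrak{R}\}$, for each fixed $K$. Concretely, $\|\varphi\|_{\EE^{M,r}(K)} = \sup_\alpha \|\partial^\alpha\varphi\|_{L^\infty(K)} / (M_\alpha \prod_{j=0}^{|\alpha|} r_j)$, so I must compare the weight $N_p$ with weights of the form $M_p \prod_{j=0}^{p} r_j$ for $r \in \mathfrak{R}$. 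The key lemma is:

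\begin{lemma}\label{aux-lemma-R}
Let $M$ be a weight sequence and $\mathfrak{M}_M = (M^n)_{n\in\N}$ with $M^n_p = M_p n^p$. For a weight sequence $N$ the following are equivalent: \emph{(i)} $N \in V(\mathfrak{M}_M)$, i.e., $M^n \prec N$ for all $n$; \emph{(ii)} there exists $r \in \mathfrak{R}$ with $N \subset (M_p \prod_{j=0}^{p} r_j)_p$; and for every $r \in \mathfrak{R}$ the sequence $(M_p\prod_{j=0}^{p} r_j)_p$ lies in $V(\mathfrak{M}_M)$.
\end{lemma}

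The proof of this lemma is the standard diagonal argument relating the two descriptions of o-growth conditions. For the direction ``$r \in \mathfrak{R} \Rightarrow (M_p\prod_{j\le p} r_j)_p \in V(\mathfrak{M}_M)$'': fix $n$; since $r_j \to \infty$ there is $j_0$ with $r_j \geq 2n$ for $j \geq j_0$, so for $p \geq j_0$ we have $\prod_{j=0}^{p} r_j \geq C_n (2n)^{p} \geq C_n n^p \cdot 2^p$, whence $M^n_p = M_p n^p \leq C'_n 2^{-p} M_p \prod_{j\le p} r_j$, and since this must hold for every $n$ with $2^{-p}$ absorbing any prescribed $h^p$, we get $M^n \prec (M_p\prod_{j\le p} r_j)_p$. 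Conversely, given $N \in V(\mathfrak{M}_M)$, for each $n$ pick $C_n, j_n$ with $M_p n^p \leq C_n n^{-p} N_p$ (using $M^n \prec N$ with $h = 1/n$) for... — more carefully, one builds $r$ by setting $r_j := $ (a slowly increasing function of $j$ chosen so that $\prod_{j=0}^{p} r_j$ is dominated by $N_p/M_p$ up to an exponential factor); the standard construction (cf. the proof of \cite[Proposition 3.5]{Komatsu3} or \cite[Lemma 4.5]{D-V}) takes $r_j \approx 2^{k}$ on the block of indices where $N_p/M_p$ first exceeds $(4^k)^p$, and the non-quasianalyticity is not needed here. The main obstacle is getting the quantifiers in this diagonal construction exactly right so that $r$ is non-decreasing and tends to infinity while controlling the product against $N_p/M_p$; everything else is bookkeeping.

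Finally, I assemble the pieces. Given $\varphi \in C^\infty(\Omega)$ with $\|\varphi\|_{\EE^{M,r}(K)}<\infty$ for all $K, r$: for each $N \in V(\mathfrak{M}_M)$ Lemma \ref{aux-lemma-R} gives $r$ and a constant with $N_p \leq C h^p M_p \prod_{j\le p} r_j \leq C M_p \prod_{j\le p}(h r_j)$, and replacing $r_j$ by $h r_j$ (still in $\mathfrak{R}$) yields $\|\varphi\|_{\EE^{N,1}(K)} \leq C \|\varphi\|_{\EE^{M,hr}(K)} < \infty$; by Theorem \ref{main}, $\varphi \in \EE^{\{\mathfrak{M}_M\}}(\Omega) = \EE^{\{M\}}(\Omega)$. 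The converse and the topological identification follow the same way, using the other implication of Lemma \ref{aux-lemma-R} to dominate each $\|\cdot\|_{\EE^{M,r}(K)}$ by some $\|\cdot\|_{\EE^{N,1}(K)}$; since both families are directed (the maximum of finitely many $r$'s, resp. the pointwise sup of finitely many $N$'s, stays in $\mathfrak{R}$, resp.\ $V(\mathfrak{M}_M)$) the two seminorm systems are mutually dominating and hence generate the same locally convex topology. This completes the reduction of Corollary \ref{cor-1} to Theorem \ref{main}.
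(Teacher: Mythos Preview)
Your overall strategy coincides with the paper's: apply Theorem~\ref{main} to a weight matrix built from $M$ and then translate $V(\mathfrak{M})$ into the family $\{(M_p\prod_{j\le p}r_j)_p : r\in\mathfrak{R}\}$. The paper does this in one line, taking the \emph{constant} weight matrix $\mathfrak{M}=(M)_{n\in\N}$ (for which $\{\mathfrak{M}.2\}'$ is literally $(M.2)'$ and $V(\mathfrak{M})=\{N:M\prec N\}$) and citing \cite[Lemma~3.4]{Komatsu3} for the translation. Your matrix $M^n_p=M_p n^p$ is harmless but redundant: since $h>0$ is already universally quantified in the relation $\prec$, one has $M^n\prec N$ for all $n$ iff $M\prec N$, so $V(\mathfrak{M}_M)=V((M)_{n\in\N})$ anyway.

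There is, however, a genuine error of direction in your Lemma~\ref{aux-lemma-R}(ii) and in its use. You state that $N\in V(\mathfrak{M}_M)$ yields $r\in\mathfrak{R}$ with $N\subset (M_p\prod_{j\le p}r_j)_p$, i.e.\ $N_p\le Ch^pM_p\prod_{j\le p}r_j$, and then conclude $\|\varphi\|_{\EE^{N,1}(K)}\le C\|\varphi\|_{\EE^{M,hr}(K)}$. But the seminorms \emph{divide} by the weight, so a smaller $N$ gives a \emph{larger} seminorm: your inequality actually yields $\|\varphi\|_{\EE^{M,hr}(K)}\le C\|\varphi\|_{\EE^{N,1}(K)}$, the wrong direction for this half of the argument. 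What is needed (and what your own proof sketch of the lemma in fact describes, when you write ``$\prod_{j\le p}r_j$ is dominated by $N_p/M_p$'') is the reverse containment: given $N$ with $M\prec N$, produce $r\in\mathfrak{R}$ with $M_p\prod_{j\le p}r_j\le CN_p$. That is precisely the content of \cite[Lemma~3.4]{Komatsu3}. Once you swap the inclusion in (ii) to $(M_p\prod_{j\le p}r_j)_p\subset N$, the final assembly works and matches the paper's proof. (A side remark: $(M.2)'$ gives an \emph{upper} bound $M_{p+1}\le CH^pM_p$, so it does not imply $(M.0)$; fortunately Theorem~\ref{main} does not require $(M.0)$, so that parenthetical is unnecessary.)
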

\begin{proof}
This follows from Theorem \ref{main} (applied to the constant weight matrix $\mathfrak{M} = (M)_{n\in\N}$) and \cite[Lemma 3.4]{Komatsu3}.
\end{proof}
Given a weight function $\omega$, we define $V(\omega)$ as the set consisting of all weight functions $\sigma$ such that $\sigma = o(\omega)$.

\begin{corollary}\label{cor-2}
Let $\Omega \subseteq \R^d$ be open and let $\omega$ be a weight function. A function $\varphi \in C^\infty(\Omega)$ belongs to $\EE^{\{\omega\}}(\Omega)$ if and only if
$$
\| \varphi\|_{\mathcal{E}^{\sigma,1}(K)} = \sup_{\alpha\in\N^d} \|\partial^{\alpha}\varphi\|_{L^{\infty}(K)} e^{-\phi_\sigma^*(|\alpha|)}<\infty 
$$
for all $K \Subset \Omega$ and $\sigma \in V(\omega)$. Moreover,  the locally convex topology of $\EE^{\{\omega\}}(\Omega)$ is generated by the system of seminorms $\{ \| \, \cdot \, \|_{\mathcal{E}^{\sigma,1}(K)} \, | \, K \Subset \Omega, \sigma \in V(\omega)\}$.
\end{corollary}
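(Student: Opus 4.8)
The plan is to deduce Corollary \ref{cor-2} from Theorem \ref{main} applied to the weight matrix $\mathfrak{M}_\omega = (M_\omega^n)_{n\in\N}$, using the identification $\EE^{\{\omega\}}(\Omega) = \EE^{\{\mathfrak{M}_\omega\}}(\Omega)$ and the fact (recorded in the excerpt) that each $M_\omega^n$ satisfies $(M.0)$ and $(M.1)$ and that $\mathfrak{M}_\omega$ satisfies $\{\mathfrak{M}.2\}'$. By Remark \ref{main-remark} it suffices to work with $V^*(\mathfrak{M}_\omega)$ in place of $V(\mathfrak{M}_\omega)$. So the whole corollary reduces to a purely combinatorial comparison of two index sets of seminorms: on one side the sequences $N \in V^*(\mathfrak{M}_\omega)$, giving seminorms $\|\cdot\|_{\EE^{N,1}(K)}$; on the other side the weight functions $\sigma \in V(\omega)$, giving seminorms $\|\cdot\|_{\EE^{\sigma,1}(K)} = \sup_\alpha \|\partial^\alpha\varphi\|_{L^\infty(K)} e^{-\phi^*_\sigma(|\alpha|)}$. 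I need to show these two systems of seminorms are equivalent, i.e., each is dominated (up to a constant) by the other.

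The key translation step is to pass between a weight function $\sigma$ and the associated weight sequence $M_\sigma$ defined by $M_{\sigma,p} = \exp(\phi^*_\sigma(p))$ (the ``$n=1$'' member of $\mathfrak{M}_\sigma$), so that $\|\cdot\|_{\EE^{\sigma,1}(K)} = \|\cdot\|_{\EE^{M_\sigma,1}(K)}$ exactly. First I would show: if $\sigma \in V(\omega)$, i.e. $\sigma = o(\omega)$, then $M_\sigma \in V^*(\mathfrak{M}_\omega)$. One checks $M_\sigma$ satisfies $(M.1)$ because $\phi^*_\sigma$ is convex (so $p \mapsto \phi^*_\sigma(p)$ is convex, which is exactly log-convexity of $M_\sigma$), and $(M.0)$ from $\gamma_0$. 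For the relation $M_\omega^n \prec M_\sigma$ for every $n$: this amounts to showing that for every $n$ and every $h>0$ there is $C_h$ with $\exp\!\big(\tfrac1n\phi^*_\omega(np)\big) \le C_h h^p \exp(\phi^*_\sigma(p))$, i.e. $\tfrac1n\phi^*_\omega(np) - \phi^*_\sigma(p) \le p\log h + \log C_h$ for all $p$. Using the Young-type duality $\phi^*(y)/y \uparrow \infty$ and the elementary consequence that $\sigma = o(\omega)$ translates into an asymptotic domination of the conjugates in the appropriate direction (a standard Legendre-transform computation, cf.\ \cite{B-M-T}), this inequality follows, with the $o$-condition supplying the ``for every $h$'' quantifier. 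Conversely I would show: given any $N \in V^*(\mathfrak{M}_\omega)$, there is $\sigma \in V(\omega)$ with $M_\sigma \le C N$ (so that $\|\cdot\|_{\EE^{\sigma,1}(K)} \lesssim \|\cdot\|_{\EE^{N,1}(K)}$), namely by letting $\sigma$ be (essentially) the Young conjugate of $p \mapsto \log N_p$, i.e. $\phi_\sigma(x) = \sup_p (px - \log N_p)$; one must verify that $\sigma$ so defined is a genuine weight function — continuity and $(\alpha)$ need a little care — and that $M^n_\omega \prec N$ for all $n$ forces $\sigma = o(\omega)$ by the same conjugation argument run backwards. Combining the two directions shows that $\{M_\sigma : \sigma \in V(\omega)\}$ is cofinal in $V^*(\mathfrak{M}_\omega)$ with respect to the order that controls the seminorms, hence the two seminorm systems generate the same topology, and the functional characterization of $\EE^{\{\omega\}}(\Omega)$ follows from that of $\EE^{\{\mathfrak{M}_\omega\}}(\Omega)$.

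The main obstacle I anticipate is the passage ``$\sigma = o(\omega) \Longleftrightarrow$ the corresponding inequality between $\tfrac1n\phi^*_\omega(n\,\cdot\,)$ and $\phi^*_\sigma$ holds with the right quantifiers''. This is a Legendre-transform manipulation that is standard in the Braun--Meise--Taylor circle of ideas but is fiddly: one has to be careful about the factor $1/n$ and the argument rescaling $p \mapsto np$ in the definition of $M^n_\omega$, and about turning a little-$o$ statement about $\omega,\sigma$ into a uniform-in-$p$ affine bound on the conjugates that is valid for every slope $\log h$. A clean way to organize it is to first prove the scalar lemma: for weight functions $\sigma,\omega$, one has $\sigma = o(\omega)$ if and only if for every $\lambda>0$ there is $C_\lambda$ with $\phi^*_\omega(\lambda y) \le \lambda\,\phi^*_\sigma(y) + C_\lambda$ (or the analogous statement), and then feed this into the definitions. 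The secondary nuisance is checking that the conjugate construction in the converse direction really produces an admissible weight function (in particular $(\alpha)$, i.e.\ $\sigma(2t) = O(\sigma(t))$), which may require first replacing $N$ by a slightly larger element of $V^*(\mathfrak{M}_\omega)$ with better regularity — harmless since we only need cofinality. Everything else (log-convexity $\Leftrightarrow$ convexity of conjugate, $(M.0)$ from $\gamma_0$, the bookkeeping of constants over $K \Subset \Omega$) is routine.
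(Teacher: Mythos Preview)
Your overall strategy coincides with the paper's: apply Theorem~\ref{main} and Remark~\ref{main-remark} to $\mathfrak{M}_\omega$ and reduce to the two cofinality statements
\begin{itemize}
\item[$(i)$] $\sigma \in V(\omega) \Rightarrow M^1_\sigma \in V^*(\mathfrak{M}_\omega)$;
\item[$(ii)$] for every $N \in V^*(\mathfrak{M}_\omega)$ there is $\sigma \in V(\omega)$ with $M^1_\sigma \subset N$.
\end{itemize}
The only substantive difference is in $(ii)$. You propose to take $\sigma$ to be (essentially) the associated function $\omega_N$, and you correctly flag that $(\alpha)$ may fail for $\omega_N$. The paper avoids this altogether by not using $\omega_N$ as the weight function: it observes $\omega_N = o(\omega)$ and then invokes \cite[Lemma~1.7 and Remark~1.8]{B-M-T} to produce a \emph{genuine} weight function $\sigma$ with $\omega_N = o(\sigma)$ and $\sigma = o(\omega)$; from $\omega_{M^1_\sigma} \asymp \sigma$ \cite[Lemma~5.7]{R-S} one gets $\omega_N \le \omega_{M^1_\sigma} + C$, whence $M^1_\sigma \subset N$ by \cite[Lemma~3.8]{Komatsu}. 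This sidesteps both the Legendre-transform computations and the $(\alpha)$ verification you identify as the main obstacle. Note also that your suggested patch for $(\alpha)$ --- replacing $N$ by a \emph{larger} element of $V^*(\mathfrak{M}_\omega)$ --- points the wrong way: you need $M^1_\sigma \le CN$, so one must interpolate something \emph{below} $N$ (equivalently, a weight function \emph{above} $\omega_N$ but still $o(\omega)$), which is exactly what the BMT lemma supplies. (Incidentally, the inequality you write after ``$M_\sigma \le CN$'' should read $\|\cdot\|_{\EE^{N,1}(K)} \lesssim \|\cdot\|_{\EE^{\sigma,1}(K)}$; that is the direction required for cofinality.)
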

\begin{proof}
By Theorem \ref{main} and Remark \ref{main-remark} (applied to the weight matrix $\mathfrak{M}_\omega$) it suffices to show that
\begin{itemize}
\item[$(i)$] $\forall \sigma \in V(\omega) \, : \,  M^1_\sigma \in V(\mathfrak{M}_\omega)$.
\item[$(ii)$] $\forall N \in V^*(\mathfrak{M}_\omega) \, \exists \sigma \in V(\omega) \, : \, M^1_\sigma \subset N$.
\end{itemize}
The first statement is obvious. We now show the second one. Let $N \in V^*(\mathfrak{M}_\omega)$ be arbitrary. Consider the associated function of $N$
$$
\omega_N(t) = \sup_{p \in \N} \log \frac{t^pN_0}{N_p}, \qquad t \geq 0.
$$
Then, $\omega_N = o(\omega)$. By \cite[Lemma 1.7 and Remark 1.8]{B-M-T}, there is a weight function $\sigma \in V(\omega)$ such that $\omega_N = o(\sigma)$. Since $\omega_{M^1_\sigma} \asymp \sigma$ \cite[Lemma 5.7]{R-S}, we obtain that
$$
\omega_N(t) \leq \omega_{M^1_\sigma}(t) + C, \qquad t \geq 0.
$$
Since both $N$ and $M^1_\sigma$ satisfy $(M.0)$ and $(M.1)$, the latter inequality yields that $M^1_\sigma \subset N$ \cite[Lemma 3.8]{Komatsu}.
\end{proof}
We now turn to the proof of Theorem \ref{main}.
We use the same idea as in  Komatsu's proof of  \cite[Proposition 3.5]{Komatsu3}.  Fix a weight matrix $\mathfrak{M}$ satisfying $\{\mathfrak{M}.2\}'$. Since any open set $\Omega \subseteq \R^d$ admits an exhaustion by compact sets that are finite unions of regular connected compact sets $K$ with smooth boundary\footnote{This follows from the existence of a positive smooth exhausting function on $\Omega$ \cite[Proposition 2.28]{lee} and Sard's theorem \cite[Theorem 6.10]{lee}.} (in particular,  $\operatorname{int} K$ is a Lipschitz domain), Theorem \ref{main} follows from the next result.
\begin{theorem}\label{main-2}
Let $K \Subset \R^d$ be a regular compact set such that $\operatorname{int} K$ is a Lipschitz domain and let $\mathfrak{M}$ be a weight matrix satisfying $\{\mathfrak{M}.2\}'$. A function $\varphi \in C^\infty(K)$ belongs to $\EE^{\{\mathfrak{M}\}}(K)$ if and only if
$\| \varphi\|_{\mathcal{E}^{N,1}(K)}<\infty$
for all $N \in V(\mathfrak{M})$. Moreover,  the locally convex topology of $\EE^{\{\mathfrak{M}\}}(K)$ is generated by the system of seminorms $\{ \| \, \cdot \, \|_{\mathcal{E}^{N,1}(K)} \, | \,  N \in V(\mathfrak{M})\}$.
\end{theorem}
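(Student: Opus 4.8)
The plan is to prove the two directions of the set-theoretic equality together with the topological identity by exhibiting the right inclusions between locally convex spaces. Write $\EE^{V}(K)$ for the space of all $\varphi \in C^\infty(K)$ with $\|\varphi\|_{\EE^{N,1}(K)} < \infty$ for all $N \in V(\mathfrak{M})$, equipped with the projective topology given by these seminorms. The easy inclusion is $\EE^{\{\mathfrak{M}\}}(K) \hookrightarrow \EE^{V}(K)$ continuously: if $\varphi \in \EE^{\{M^n\}}(K)$ for some $n$, then $\|\partial^\alpha \varphi\|_{L^\infty(K)} \le C h^{-|\alpha|} M^n_\alpha$ for all $h > 0$ and suitable $C = C_h$, and since $M^n \prec N$ for every $N \in V(\mathfrak{M})$, choosing $h$ adapted to the constants in $M^n \prec N$ gives $\|\varphi\|_{\EE^{N,1}(K)} < \infty$; a routine bookkeeping of the constants shows this map is continuous.

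The substance is the reverse inclusion $\EE^{V}(K) \hookrightarrow \EE^{\{\mathfrak{M}\}}(K)$ together with continuity, and here I would follow Komatsu's scheme. Suppose $\varphi \in \EE^{V}(K)$. Set $a_p := \sup_{|\alpha| = p} \|\partial^\alpha \varphi\|_{L^\infty(K)}$. If $\varphi \notin \EE^{\{\mathfrak{M}\}}(K)$, then for every $n$ and every $h > 0$ we have $\sup_p a_p h^p / M^n_p = \infty$; using $M^n \le M^{n+1}$ one extracts, by a diagonal argument over $n$ and over a sequence $h_j \to 0$, a sequence $p_1 < p_2 < \cdots$ and indices $n_1 \le n_2 \le \cdots$ with $a_{p_k} \ge k^{p_k} M^{n_k}_{p_k}$ (say). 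The goal is to manufacture from this a single weight sequence $N \in V(\mathfrak{M})$ with $a_{p_k}/N_{p_k} \to \infty$, contradicting $\varphi \in \EE^V(K)$. One defines $N_p$ by interpolating: on the blocks determined by the $p_k$'s one forces $N_{p_k}$ to be just small enough that $a_{p_k}/N_{p_k} \to \infty$ yet $M^n_p \prec N$ still holds for every $n$; the condition $\{\mathfrak{M}.2\}'$ is exactly what allows the ``derivation of increments'' argument — passing from control of $M^n_{p+1}/M^n_p$ to control of $M^m_{p+1}/M^m_p$ — to keep $N$ above all the $M^n$ in the $\prec$ sense while the block values stay controlled. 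By Remark \ref{main-remark} one may moreover replace $N$ by its log-convex minorant, which lies in $V^*(\mathfrak{M})$ and is $\le N$, so there is no loss in also asking $N$ to satisfy $(M.1)$; this is often convenient for the interpolation.

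For the topological statement, the identity of the underlying sets plus the continuous inclusion $\EE^{\{\mathfrak{M}\}}(K) \hookrightarrow \EE^{V}(K)$ reduces matters to continuity of the inverse. Since $\EE^{\{\mathfrak{M}\}}(K) = \varinjlim_n \EE^{\{M^n\}}(K)$ is an $(LB)$-space, by Grothendieck's factorization theorem (or a direct closed-graph argument, both spaces being webbed/Fréchet-type enough) it suffices to show that the identity $\EE^V(K) \to \EE^{\{\mathfrak{M}\}}(K)$ has closed graph, which follows from convergence in either space implying coordinatewise ($L^\infty$-derivative) convergence. Alternatively, one shows directly that every continuous seminorm on $\EE^{\{\mathfrak{M}\}}(K)$ is dominated by some $\|\cdot\|_{\EE^{N,1}(K)}$ — this is again an argument of the same interpolation type and is how Komatsu phrases it. The main obstacle, as in \cite{Komatsu3}, is the explicit construction of the weight sequence $N$ in the reverse inclusion: one must simultaneously (a) defeat the supposed membership, i.e. make $a_{p}/N_p$ unbounded along a subsequence, (b) keep $M^n \prec N$ for all $n$, and (c) if desired, keep $N$ log-convex; reconciling (a) with (b) is where $\{\mathfrak{M}.2\}'$ enters decisively, through the trick of differencing the sequence $\log M^n_p$ and reassembling. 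The Lipschitz-domain hypothesis on $\operatorname{int} K$ is used only to have good control of $C^\infty(K)$ (e.g. a bounded extension operator, or at least that the sup-seminorms of derivatives on $K$ behave well), ensuring the spaces above are well-defined Banach/$(LB)$-spaces and the closed-graph machinery applies.
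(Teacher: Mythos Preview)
Your set-theoretic argument is essentially Lemma~\ref{algebraic}$(i)$ of the paper and is correct; note, however, that this step does \emph{not} use $\{\mathfrak{M}.2\}'$ at all (the block construction $N_p = n^p M^n_p$ for $p_n \le p < p_{n+1}$ already lies in $V(\mathfrak{M})$ just from $M^n \le M^{n+1}$).

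The genuine gap is in the topological assertion. Neither of your two proposed routes works as stated:
\begin{itemize}
\item \emph{Closed graph/open mapping.} The space $\EE^{V}(K)$ is a projective limit of Banach spaces over the uncountable index set $V(\mathfrak{M})$; there is no reason for it to be ultrabornological (or even barrelled) a priori, so De Wilde's closed graph theorem and Grothendieck factorization are not available for the identity $\EE^{V}(K)\to \EE^{\{\mathfrak{M}\}}(K)$. Knowing that the graph is closed is easy, but without the domain hypothesis the conclusion does not follow.
\item \emph{``Same interpolation type'' for seminorms.} An arbitrary continuous seminorm $q$ on the $(LB)$-space $\EE^{\{\mathfrak{M}\}}(K)$ is not given by a supremum over $\alpha$ of $c_\alpha\|\partial^\alpha\varphi\|_{L^\infty}$, so the diagonal construction of Lemma~\ref{algebraic} has nothing to grab onto. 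From continuity on each step you only get $q(\varphi)\le C_{n,h}\|\varphi\|_{\EE^{M^n,h}(K)}$, and there is no direct way to glue these into a single $\|\cdot\|_{\EE^{N,1}(K)}$ bound.
\end{itemize}

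What the paper actually does for the hard direction is pass to the dual. One writes $q(\varphi)=\sup_{T\in B}|\langle T,\varphi\rangle|$ for an equicontinuous set $B\subset(\EE^{\{\mathfrak{M}\}}(K))'$ and proves a uniform structure theorem (Proposition~\ref{vstkln135}): there exist a single $N\in V(\mathfrak{M})$ and $C>0$ such that every $T\in B$ admits a representation $\langle T,\varphi\rangle=\sum_\alpha\int_K F_{\alpha,T}\,\partial^\alpha\varphi$ with $\sup_\alpha\|F_{\alpha,T}\|_{L^2(K)}N_\alpha\le C$. This is obtained by showing that the map $(F_\alpha)_\alpha\mapsto\sum_\alpha(-1)^{|\alpha|}\partial^\alpha F_\alpha$ from a suitable Fr\'echet sequence space into $(\EE^{\{\mathfrak{M}\}}(K))'_\beta$ is surjective (closed-range of the transpose via Sobolev embedding on the Lipschitz domain $\operatorname{int}K$), then lifting the bounded set $B$ through this surjection using that the target is Fr\'echet--Montel (Lemma~\ref{lemma-1}) and finally invoking Lemma~\ref{algebraic}$(ii)$. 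This is also where the hypotheses are really used: the Lipschitz assumption provides both the $(DFS)$-structure (via Whitney extension) and the Sobolev embedding $H^k(\operatorname{int}K)\hookrightarrow C(K)$, and $\{\mathfrak{M}.2\}'$ is what absorbs the finite derivative loss $M^n_{p+k}\le CH^pM^m_p$ coming from that embedding. Your plan misses this dual-side mechanism and misplaces the role of both hypotheses.
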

The rest of this article is devoted to the proof of Theorem \ref{main-2}. We start with the following technical lemma (cf.\ \cite[Lemma 3.4]{Komatsu}).
\begin{lemma}\label{algebraic} Let $(a_p)_{p \in \N}$ be a sequence of positive numbers.
\begin{itemize}
\item[$(i)$] $\displaystyle \sup_{p \in \N} \frac{h^p a_p}{M^n_p} < \infty$ for some $h > 0$ and $n \in \N$ if and only if $\displaystyle \sup_{p \in \N} \frac{a_p}{N_p} < \infty$ for all $N \in V(\mathfrak{M})$.
\item[$(ii)$] $\displaystyle \sup_{p \in \N} a_pN_p < \infty$ for some $N \in V(\mathfrak{M})$ if and only if $\displaystyle \sup_{p \in \N} \frac{a_pM^n_p}{h^p} < \infty$ for all $h > 0$ and $n \in \N$.
\end{itemize}
\end{lemma}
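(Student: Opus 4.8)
The statement packages four implications, and only one of them—the backward half of $(i)$—calls for a genuine construction; the other three I would dispatch by routine manipulations of the relation $\prec$. For the forward half of $(i)$: if $\sup_{p}h^{p}a_{p}/M^{n}_{p}=:C<\infty$ for some $h>0$ and $n\in\N$, then for an arbitrary $N\in V(\mathfrak{M})$ the relation $M^{n}\prec N$, applied with precisely this $h$, furnishes $C'>0$ with $M^{n}_{p}\le C'h^{p}N_{p}$ for all $p$; hence $a_{p}\le Ch^{-p}M^{n}_{p}\le CC'N_{p}$ and $\sup_{p}a_{p}/N_{p}\le CC'<\infty$.

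The backward half of $(i)$ I would prove by contraposition. Assume $\sup_{p}h^{p}a_{p}/M^{n}_{p}=\infty$ for every $h>0$ and $n\in\N$; the task is to exhibit one $N\in V(\mathfrak{M})$ with $\sup_{p}a_{p}/N_{p}=\infty$. Using the hypothesis with $h=2^{-k}$ and $n=k$, the index set $\{p:2^{-kp}a_{p}/M^{k}_{p}\ge k\}$ is infinite for each $k\ge 1$, so one may pick $p_{1}<p_{2}<\cdots$ with $p_{k}\ge k$ and $a_{p_{k}}\ge k\,2^{kp_{k}}M^{k}_{p_{k}}$. Define $N$ by $N_{p_{k}}:=2^{kp_{k}}M^{k}_{p_{k}}$ at the selected indices and $N_{p}:=p^{p}M^{p}_{p}$ (with $N_{0}:=1$) elsewhere; all these values are positive, so $N$ is a weight sequence. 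Then $a_{p_{k}}/N_{p_{k}}\ge k\to\infty$, so $\sup_{p}a_{p}/N_{p}=\infty$. To see $N\in V(\mathfrak{M})$, fix $n$ and $h>0$: at a selected index $p=p_{k}$ with $k\ge n$ one has $N_{p_{k}}\ge 2^{kp_{k}}M^{n}_{p_{k}}$ (since $M^{n}\le M^{k}$), so $M^{n}_{p_{k}}/(h^{p_{k}}N_{p_{k}})\le(2^{k}h)^{-p_{k}}$, which is $\le 1$ once $2^{k}h\ge 1$; at a non-selected index $p\ge n$ one has $N_{p}\ge p^{p}M^{n}_{p}$, so the corresponding ratio is $\le(ph)^{-p}\le 1$ once $ph\ge 1$; only finitely many indices escape both bounds. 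Hence $\sup_{p}M^{n}_{p}/(h^{p}N_{p})<\infty$ for all $n$ and $h$, i.e.\ $N\in V(\mathfrak{M})$—contradicting the assumption that $\sup_{p}a_{p}/N_{p}<\infty$ for every $N\in V(\mathfrak{M})$.

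Part $(ii)$ I would reduce to the reciprocal substitution $b_{p}:=1/a_{p}$, which is again a weight sequence. The existence of $N\in V(\mathfrak{M})$ with $\sup_{p}a_{p}N_{p}<\infty$ is the same as the existence of $N\in V(\mathfrak{M})$ with $N_{p}\le Cb_{p}$ for some $C$; since $\prec$ is unaffected by passing to a pointwise larger sequence or to a constant multiple, this holds if and only if $b\in V(\mathfrak{M})$, i.e.\ $M^{n}\prec b$ for every $n$. Reading $M^{n}\prec b$ as ``$\forall h>0\ \exists C\ \forall p:\ M^{n}_{p}\le Ch^{p}b_{p}$'' and multiplying through by $a_{p}$ turns it verbatim into ``$\sup_{p}a_{p}M^{n}_{p}/h^{p}<\infty$ for all $h>0$ and $n\in\N$'', which is the right-hand condition of $(ii)$; the easy direction can also be read off directly from $M^{n}_{p}/h^{p}\le C_{h}N_{p}$.

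The single delicate point is the calibration in the backward half of $(i)$: the factors $2^{kp_{k}}$ at the sparse indices $p_{k}$ must be small enough relative to $a_{p_{k}}$ to keep $\sup_{p}a_{p}/N_{p}$ infinite, yet—because $k\to\infty$—still large enough to outgrow every geometric weight $h^{p}$, which is exactly what makes $M^{n}\prec N$ survive for each $n$; using $h=2^{-k}$ and $n=k$ simultaneously in the selection is what buys the necessary room. Everything else is bookkeeping with $\prec$, and $(ii)$ requires no idea beyond the reciprocal substitution.
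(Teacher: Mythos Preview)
Your argument is correct. For the contrapositive in $(i)$ you follow essentially the same idea as the paper: both proofs select indices $p_k$ where $a_{p_k}$ is large relative to $k^{p_k}M^k_{p_k}$ (you use base $2$ instead of base $k$, and fill in the unselected indices with $p^pM^p_p$ rather than defining $N$ on blocks $[p_n,p_{n+1})$ by $N_p=n^pM^n_p$, but these are cosmetic differences). For $(ii)$ your route is genuinely cleaner than the paper's: the paper constructs the witness explicitly as $N_p=\sup_n n^pM^n_p/C_n$ and then checks $N\in V(\mathfrak{M})$ and $\sup_p a_pN_p\le 1$, whereas you observe that the right-hand condition of $(ii)$ is verbatim the statement $b:=1/a\in V(\mathfrak{M})$, so one may simply take $N=b$; this bypasses the supremum construction entirely. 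Both approaches yield the same conclusion, but your reduction makes the equivalence in $(ii)$ transparent as a tautology about $\prec$.
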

\begin{proof} 
The direct implications are clear. We now show the converse ones.

$(i)$ Suppose that $\sup_{p \in \N} a_p/(n^pM^n_p) = \infty$ for all $n \in \N$. Choose a strictly increasing sequence $(p_n)_{n \in \N}$ of natural numbers with $p_0 = 0$ and 
$$
\frac{a_{p_n}}{n^{p_n}M^n_{p_n}} \geq n, \qquad n \in \Z_+.
$$
Define $N_p = n^{p}M^n_{p}$ if $p_n \leq p < p_{n+1}$. Then, $N = (N_p)_{p \in \N}$ belongs to  $V(\mathfrak{M})$ but $\sup_{p \in \N} a_p/N_p = \infty$, a contradiction.

$(ii)$ For each $n \in \N$ there is $C_n > 0$ such that 
$$
n^pM^n_p \leq \frac{C_n}{a_p}, \qquad p \in \N.
$$
Define
$$
N_p = \sup_{n \in \N} \frac{n^p M^n_p}{C_n}, \qquad p \in \N.
$$
Then, $N = (N_p)_{p \in \N}$ belongs to  $V(\mathfrak{M})$ and $\sup_{p \in \N} a_pN_p < \infty$.
\end{proof}

 A set $A \subseteq \R^d$ is said to be $\emph{quasiconvex}$ if there exists $C>0$ such that any two points $x,y\in A$ can be joined by a  curve in $A$ with length at most $C|x-y|$. This notion was introduced by Whitney \cite{whitney2} under the name property $(P)$. The closure of a quasiconvex open set is again quasiconvex \cite[Lemma 2]{whitney2}.  Moreover, every  bounded Lipschitz domain is quasiconvex \cite[Section 2.5]{B-B}.

Let $K \Subset \R^d$ be a regular compact set such that $\operatorname{int} K$ is quasiconvex. For $n\in\N$ we denote by $C^n(K)$ the vector space of all $\varphi\in C^n(\operatorname{int}K)$ such that $\partial^{\alpha}\varphi$ extends to a continuous function on $K$ for each $|\alpha|\leq n$;  it is a Banach space when endowed with the norm $\sup_{|\alpha|\leq n}\|\partial^{\alpha}\varphi\|_{L^{\infty}(K)}$. By \cite[Theorem, p.\ 485]{whitney2}, the space $C^n(K)$ is topologically isomorphic to the Banach space of Whitney jets of order $n$ on $K$ \cite{whitney1}. Let $R > 0$ be such that $K \Subset B(0,R)$. Whitney's extension theorem \cite[Theorem I]{whitney3} yields the existence of a  continuous linear extension operator $C^n(K) \rightarrow C^n(\overline{B}(0,R))$, that is, a continuous linear right inverse of the restriction mapping $C^n(\overline{B}(0,R)) \rightarrow C^n(K)$. The latter implies that the inclusion mapping $C^{n+1}(K) \rightarrow C^{n}(K)$ is compact. A standard argument (cf.\ \cite[Proposition 2.2]{Komatsu})
 therefore gives the following result.

\begin{lemma}\label{lemma-1} Let $K \Subset \R^d$ be a regular compact set such that $\operatorname{int} K$ is quasiconvex. Then, $\EE^{\{\mathfrak{M}\}}(K)$ is a $(DFS)$-space.
\end{lemma}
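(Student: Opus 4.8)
The plan is to show that $\EE^{\{\mathfrak{M}\}}(K)$ is a countable inductive limit of Banach spaces with compact linking maps, which is precisely the definition of a $(DFS)$-space. Recall that $\EE^{\{\mathfrak{M}\}}(K) = \varinjlim_{n \in \N} \EE^{\{M^n\}}(K)$ and each $\EE^{\{M^n\}}(K) = \varinjlim_{h \to 0^+} \EE^{M^n, h}(K)$, so by a diagonal argument (extracting a cofinal countable subsequence of the parameter $h$, e.g.\ $h = 1/k$) one first rewrites $\EE^{\{\mathfrak{M}\}}(K)$ as a single countable inductive limit $\varinjlim_{k \in \N} \EE^{M^{n_k}, h_k}(K)$ of Banach spaces; concretely one can take the countable directed set of pairs $(n, 1/k)$. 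Thus it suffices to exhibit, for each index, a later index such that the corresponding inclusion of Banach spaces is a compact operator.

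The key step is therefore to prove the following: for fixed $n \in \N$ and $h > 0$, there exist $m \in \N$ and $h' > 0$ such that the inclusion $\EE^{M^n, h}(K) \hookrightarrow \EE^{M^m, h'}(K)$ is compact. Here I would use the condition $\{\mathfrak{M}.2\}'$ together with the reasoning indicated in the excerpt just before the lemma. The idea is: the condition $\{\mathfrak{M}.2\}'$ gives, for the chosen $n$, an $m$ and constants $C, H$ with $M^n_{p+1} \le C H^p M^m_p$. One should then factor the inclusion $\EE^{M^n, h}(K) \hookrightarrow \EE^{M^m, h'}(K)$ through a space of the form $C^{j+1}(K) \hookrightarrow C^{j}(K)$ (the compact inclusion of Whitney-jet Banach spaces established right before the lemma via Whitney's extension theorem), exploiting that a bound on $\partial^\alpha \varphi$ for $|\alpha| = p+1$ in terms of $M^n_{p+1}$ controls $\partial^\beta \varphi$ for $|\beta| = p$ in terms of $M^m_p$, while the values of $\partial^\beta \varphi$ for small $|\beta|$ are controlled by a finite-order $C^j$-norm. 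The ``standard argument'' alluded to (cf.\ \cite[Proposition 2.2]{Komatsu}) is exactly this type of interpolation between a Roumieu-type estimate and a finite-order derivative estimate: one writes an arbitrary $\varphi$ in the unit ball of $\EE^{M^n,h}(K)$ and shows that, after the shift in the weight sequence afforded by $\{\mathfrak{M}.2\}'$, the map into $\EE^{M^m,h'}(K)$ coincides (up to bounded factors) with the composition of the bounded embedding $\EE^{M^n,h}(K) \to C^{j+1}(K)$, the compact inclusion $C^{j+1}(K) \to C^{j}(K)$, and a bounded map $C^{j}(K) \to \EE^{M^m,h'}(K)$ --- the last being bounded because the higher derivatives are already uniformly controlled on the unit ball by the weight estimate, so only finitely many derivatives genuinely need to be re-estimated. (The quasiconvexity/Lipschitz hypothesis on $\operatorname{int} K$ enters precisely to make $C^n(K)$ isomorphic to the Whitney-jet space and to have the extension operator, hence the compact inclusion $C^{n+1}(K) \to C^{n}(K)$.)

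I expect the main technical point --- though a routine one --- to be a clean bookkeeping of this factorization: making sure the shift $n \rightsquigarrow m$ and the choice of $h'$ are compatible with both the compact inclusion $C^{j+1}(K)\to C^j(K)$ and the boundedness of the two flanking maps, uniformly over the unit ball. Once one such compact linking map is produced for each index, the inductive limit is $(DFS)$ by definition. No essential obstacle is anticipated beyond carefully invoking the already-established compactness of $C^{n+1}(K)\hookrightarrow C^n(K)$ and the estimate supplied by $\{\mathfrak{M}.2\}'$; the argument is essentially Komatsu's, transcribed to the weight-matrix setting.
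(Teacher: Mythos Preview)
Your overall strategy---exhibit $\EE^{\{\mathfrak{M}\}}(K)$ as a countable inductive limit of Banach spaces with compact linking maps, using the compactness of $C^{j+1}(K)\hookrightarrow C^j(K)$ established just before the lemma---is exactly the paper's approach (the ``standard argument'' of Komatsu). Two points, however, need correction.

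First, the condition $\{\mathfrak{M}.2\}'$ plays no role here. For fixed $n$ and any $h' < h$ the inclusion $\EE^{M^n,h}(K)\hookrightarrow\EE^{M^n,h'}(K)$ is already compact; one never needs to pass to a different weight sequence $M^m$. The weight-matrix structure is irrelevant for this lemma. Second, the factorization you describe does not exist as a composition of bounded linear maps: there is no bounded map $C^j(K)\to\EE^{M^m,h'}(K)$, since a generic element of $C^j(K)$ is not even smooth. Your parenthetical (``the higher derivatives are already uniformly controlled on the unit ball'') shows you sense this, but then compactness cannot be deduced simply by composing with a compact operator. The actual argument is different in shape: take a bounded sequence in $\EE^{M^n,h}(K)$, use the compact inclusions $C^{j+1}(K)\to C^j(K)$ together with a diagonal extraction to obtain a subsequence converging in every $C^j(K)$, and then verify convergence in $\EE^{M^n,h'}(K)$ by splitting the supremum at a cutoff $|\alpha|=N$---the finitely many low-order terms converge by $C^N$-convergence, while the tail is uniformly small thanks to the factor $(h'/h)^{|\alpha|}$. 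Equivalently, one shows directly that the unit ball is totally bounded in the target norm. This is the content of \cite[Proposition 2.2]{Komatsu} that the paper invokes.
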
 

Next, we show a structural result for the dual of $\EE^{\{\mathfrak{M}\}}(K)$; this is the crux of the proof of Theorem \ref{main-2}. We need some preparation. Given a Banach space $E$, a weight sequence $M = (M_p)_{p \in \N}$ and $h>0$, we define $\Lambda^{M,h}(E)$ as the Banach space consisting of all multi-indexed sequences $e = (e_{\alpha})_{\alpha \in \N^d} \in E^{\N^d}$ such that
$$
\|e\|_{\Lambda^{M,h}(E)} := \sup_{\alpha \in \N^d} \frac{h^{|\alpha|} \|e_\alpha\|_E}{M_\alpha} < \infty.
$$
We define the $(LB)$-space
$$
\Lambda^{\{\mathfrak{M}\}}(E) := \varinjlim_{n \in \N} \varinjlim_{h \to 0^+} \Lambda^{M_n,h}(E)
$$
and the Fr\'echet space
$$
\Lambda^{' \{\mathfrak{M}\}}(E) := \varprojlim_{n \in \N} \varprojlim_{h \to 0^+} \Lambda^{1/M_n,1/h}(E).
$$
The dual of $\Lambda^{' \{\mathfrak{M}\}}(E)$ may be identified with $\Lambda^{\{\mathfrak{M}\}}(E')$; the dual pairing under this identification is given by 
$$
\langle e',e \rangle = \sum_{\alpha \in \N^d} \langle e'_\alpha,e_\alpha \rangle, \qquad e' \in \Lambda^{\{\mathfrak{M}\}}(E'), e \in \Lambda^{'\{\mathfrak{M}\}}(E).
$$

\begin{proposition}\label{vstkln135} Let $K \Subset \R^d$ be a regular compact set such that $\operatorname{int} K$ is a Lipschitz domain and let $\mathfrak{M}$ be a weight matrix satisfying $\{\mathfrak{M}.2\}'$.
	Let $B$ be an equicontinuous subset of $(\EE^{ \{\mathfrak{M}\}}(K))'$. There exist $N \in V(\mathfrak{M})$ and $C > 0$ such that for each $T \in B$ there is a family $\{F_{\alpha,T}\in L^2(K)|\,\alpha\in\N^d\}$ satisfying
	\beq\label{kvstld135}
\sup_{\alpha\in\N^d}\|F_{\alpha,T}\|_{L^2(K)}N_{\alpha} \leq C
	\eeq
	and
	\beq\label{stvknr157}
	\langle T,\varphi\rangle=\sum_{\alpha\in\N^d}\int_K F_{\alpha,T}(x)\partial^{\alpha}\varphi(x)dx, \qquad \varphi\in\EE^{\{\mathfrak{M}\}}(K).
	\eeq
\end{proposition}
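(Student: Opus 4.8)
The plan is to realise $\EE^{\{\mathfrak{M}\}}(K)$ as a topological subspace of the sequence space $\Lambda^{\{\mathfrak{M}\}}(L^2(K))$ through the jet map $J\varphi := (\partial^{\alpha}\varphi)_{\alpha\in\N^d}$, to extend the members of $B$ (transported by $J$) to an equicontinuous subset of $(\Lambda^{\{\mathfrak{M}\}}(L^2(K)))'$, and then to read off the functions $F_{\alpha,T}$ by a Riesz representation argument together with Lemma \ref{algebraic}$(ii)$ --- the latter being exactly what turns the level-by-level bounds into a \emph{single} $N\in V(\mathfrak{M})$ working for the whole family.

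First I would record the effect of equicontinuity: since a $0$-neighbourhood of $\EE^{\{\mathfrak{M}\}}(K)$ traces to a $0$-neighbourhood on each Banach step $\EE^{M^{n},h}(K)$, for every $n\in\N$ and $h>0$ there is $C_{n,h}>0$ with $|\langle T,\varphi\rangle|\le C_{n,h}\|\varphi\|_{\EE^{M^{n},h}(K)}$ for all $T\in B$ and all $\varphi\in\EE^{M^{n},h}(K)$. The decisive input --- and the only place the hypothesis that $\operatorname{int}K$ is a Lipschitz domain is used --- is the following. Put $k:=\lfloor d/2\rfloor+1$. The Sobolev embedding $W^{k,2}(\operatorname{int}K)\hookrightarrow C(K)$ (valid since bounded Lipschitz domains are Sobolev extension domains) gives $\|\partial^{\alpha}\varphi\|_{L^{\infty}(K)}\lesssim\sum_{|\beta|\le k}\|\partial^{\alpha+\beta}\varphi\|_{L^2(K)}$ for all $\alpha$. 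Iterating $\{\mathfrak{M}.2\}'$ exactly $k$ times shows that for every $n$ there are $\rho(n)\ge n$ and $A,H>0$ with $M^{n}_{p+j}\le AH^{p}M^{\rho(n)}_{p}$ for all $p\in\N$ and $0\le j\le k$. Feeding the second estimate into the first (the factor $H^{|\alpha|}$ being absorbed by passing from $h$ to $hH$, the $k$ extra derivatives by relabelling $\alpha+\beta$, at the cost of a bounded factor $(hH)^{-|\beta|}$, $|\beta|\le k$) yields, for every $n$ and $h>0$, a constant $D_{n,h}>0$ with
\[
\|\varphi\|_{\EE^{M^{\rho(n)},h}(K)}\le D_{n,h}\,\|J\varphi\|_{\Lambda^{M^{n},hH}(L^2(K))},\qquad \varphi\in C^{\infty}(K).
\]
In particular, if $J\varphi\in\Lambda^{M^{n},hH}(L^2(K))$ then automatically $\varphi\in\EE^{M^{\rho(n)},h}(K)$; together with the trivial bound $\|J\varphi\|_{\Lambda^{M^{n},h}(L^2(K))}\le|K|^{1/2}\|\varphi\|_{\EE^{M^{n},h}(K)}$, a comparison of the two defining inductive spectra (using the regularity of the $(DFS)$-space $\EE^{\{\mathfrak{M}\}}(K)$ from Lemma \ref{lemma-1}) shows that $J$ is a topological embedding of $\EE^{\{\mathfrak{M}\}}(K)$ into $\Lambda^{\{\mathfrak{M}\}}(L^2(K))$.

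Since $J$ is a topological embedding and $B$ is equicontinuous, $\{T\circ J^{-1}\mid T\in B\}$ is an equicontinuous family of functionals on the subspace $J(\EE^{\{\mathfrak{M}\}}(K))$ of $\Lambda^{\{\mathfrak{M}\}}(L^2(K))$; by the Hahn--Banach theorem in its bound-preserving form for locally convex spaces it extends to an equicontinuous family $\{\widetilde T\mid T\in B\}\subseteq(\Lambda^{\{\mathfrak{M}\}}(L^2(K)))'$. Equicontinuity then gives, for each $n$ and $h$, a constant $\delta_{n,h}>0$ with $|\langle\widetilde T,y\rangle|\le\delta_{n,h}^{-1}\|y\|_{\Lambda^{M^{n},h}(L^2(K))}$ for all $T\in B$ and $y\in\Lambda^{M^{n},h}(L^2(K))$. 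For fixed $\alpha$, testing $\widetilde T$ against the sequence having $f\in L^2(K)$ in slot $\alpha$ and $0$ elsewhere gives a bounded linear functional on $L^2(K)$ of norm $\le\delta_{n,h}^{-1}h^{|\alpha|}/M^{n}_{\alpha}$; by the Riesz representation theorem there is $F_{\alpha,T}\in L^2(K)$ representing it, with $\|F_{\alpha,T}\|_{L^2(K)}\le\delta_{n,h}^{-1}h^{|\alpha|}/M^{n}_{\alpha}$ for every $n$ and $h$. Putting $a_p:=\sup_{T\in B}\sup_{|\alpha|=p}\|F_{\alpha,T}\|_{L^2(K)}$, this reads $\sup_p a_p M^{n}_p/h^p<\infty$ for all $n$ and $h$, so Lemma \ref{algebraic}$(ii)$ furnishes $N\in V(\mathfrak{M})$ and $C>0$ with $\sup_p a_p N_p\le C$; this is \eqref{kvstld135}. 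Finally, for any $\varphi\in\EE^{\{\mathfrak{M}\}}(K)$ the partial sums $\sum_{|\alpha|\le m}(\partial^{\alpha}\varphi)$ (placed in slot $\alpha$) converge to $J\varphi$ in $\Lambda^{\{\mathfrak{M}\}}(L^2(K))$ --- the tail is $O(2^{-m})$ in the norm of a suitable step --- so continuity of $\widetilde T$ yields $\langle T,\varphi\rangle=\langle\widetilde T,J\varphi\rangle=\sum_{\alpha}\int_K F_{\alpha,T}(x)\,\partial^{\alpha}\varphi(x)\,\dx$, which is \eqref{stvknr157}.

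The step I expect to be the main obstacle is the displayed inequality of the second paragraph and its consequence that $J$ embeds $\EE^{\{\mathfrak{M}\}}(K)$ topologically into $\Lambda^{\{\mathfrak{M}\}}(L^2(K))$: obtaining genuine $L^2$-coefficients (rather than $L^{\infty}$-coefficients, or merely distributional ones) forces the Sobolev embedding and hence the Lipschitz hypothesis, and one must spend exactly $k=\lfloor d/2\rfloor+1$ applications of $\{\mathfrak{M}.2\}'$ to absorb the $k$ extra derivatives into a shift of the weight matrix from $M^{n}$ to $M^{\rho(n)}$. A secondary point demanding care is that every estimate must come with constants that are uniform over the equicontinuous family $B$ and valid at all levels $n$ simultaneously, since it is precisely this uniformity, via Lemma \ref{algebraic}$(ii)$, that delivers the single $N\in V(\mathfrak{M})$ in \eqref{kvstld135} and ultimately the topological statement in Theorem \ref{main-2}.
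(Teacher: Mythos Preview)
Your proof is correct and takes a route that is in a sense dual to the paper's. The paper works on the Fr\'echet side: it considers the map $S:\Lambda^{'\{\mathfrak{M}\}}(L^2(K))\to(\EE^{\{\mathfrak{M}\}}(K))'_\beta$, $(F_\alpha)_\alpha\mapsto\sum_\alpha(-1)^{|\alpha|}\partial^\alpha F_\alpha$, shows it is surjective by verifying that its transpose (which is exactly your jet map $J$) is injective with weak-$*$ closed range --- this is where the Sobolev embedding and $\{\mathfrak{M}.2\}'$ enter, just as in your second paragraph --- and then invokes the fact that a continuous linear surjection from a Fr\'echet space onto a Fr\'echet--Montel space lifts bounded sets \cite[Corollary~26.22]{M-V}, after which Lemma~\ref{algebraic}$(ii)$ finishes. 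You stay on the primal side: the same Sobolev/weight-shift estimate is used to show that $J$ is a topological embedding, and then Hahn--Banach plus the Riesz representation theorem do the work that the surjectivity-plus-lifting argument does for the paper. Your path is somewhat more elementary in that it avoids the closed range theorem and the Fr\'echet--Montel lifting result; the paper's approach, in turn, makes the duality explicit and yields the surjectivity of $S$ as a structural by-product.

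One point deserves tightening. The sentence ``a comparison of the two defining inductive spectra (using the regularity of the $(DFS)$-space \ldots) shows that $J$ is a topological embedding'' is not quite a proof. The step-wise sandwich you establish does show that the inductive spectra $\bigl(J(\EE^{M^n,h}(K))\bigr)$ and $\bigl(J(\EE^{\{\mathfrak{M}\}}(K))\cap\Lambda^{M^n,h}(L^2(K))\bigr)$ are mutually cofinal, so the corresponding inductive-limit topologies on $J(\EE^{\{\mathfrak{M}\}}(K))$ coincide; but the latter is in general finer than the subspace topology inherited from $\Lambda^{\{\mathfrak{M}\}}(L^2(K))$, and regularity alone does not close this gap (a subspace of a bornological space need not be bornological). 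The clean fix is De Wilde's open mapping theorem: $\EE^{\{\mathfrak{M}\}}(K)$ is $(DFS)$, hence ultrabornological; $J(\EE^{\{\mathfrak{M}\}}(K))$ with the subspace topology is webbed, being a subspace of the $(LB)$-space $\Lambda^{\{\mathfrak{M}\}}(L^2(K))$; and $J$ is a continuous linear bijection onto it, so $J$ is open. Once this is in place, your Hahn--Banach extension of the equicontinuous family, the coefficient extraction via Riesz, the passage to a single $N\in V(\mathfrak{M})$ via Lemma~\ref{algebraic}$(ii)$, and the partial-sum argument for \eqref{stvknr157} all go through exactly as written.
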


\begin{proof} 
We claim that the continuous linear mapping
$$
S: \Lambda^{' \{\mathfrak{M}\}}(L^2(K)) \rightarrow (\EE^{ \{\mathfrak{M}\}}(K))'_\beta, \quad (F_\alpha)_{\alpha \in \N^d} \mapsto \sum_{\alpha \in \N^d} (-1)^{|\alpha|} \partial^{\alpha}F_\alpha
$$
is surjective. Before we prove the claim, let us show how it implies the result. By Lemma \ref{algebraic}$(ii)$, it suffices to show that for every bounded subset $B$ of $(\EE^{ \{\mathfrak{M}\}}(K))'_\beta$ (in particular, for every equicontinuous subset $B$ of $(\EE^{ \{\mathfrak{M}\}}(K))'$) there is a bounded subset $A$ of $\Lambda^{' \{\mathfrak{M}\}}(L^2(K))$ such that $S(A) = B$. Since $(\EE^{ \{\mathfrak{M}\}}(K))'_\beta$ is a Fr\'echet-Montel space (Lemma \ref{lemma-1}), this follows from the following general fact \cite[Corollary 26.22]{M-V}: Let $T: E \rightarrow F$ be a surjective continuous linear mapping between a Fr\'echet space $E$ and a Fr\'echet-Montel space $F$. Then, for every bounded subset $B$ of  $F$ there is a bounded subset $A$ of $E$ such that $S(A) = B$. We now prove the claim. To this end, it suffices to show that the transpose of $S$ is injective and has weak-$\ast$ closed range. By the remarks preceding this proposition and the fact that $\EE^{ \{\mathfrak{M}\}}(K)$ is reflexive (Lemma \ref{lemma-1}), we may identify the transpose of $S$ with the mapping
$$
S^t: \EE^{ \{\mathfrak{M}\}}(K) \rightarrow \Lambda^{\{\mathfrak{M}\}}(L^2(K)), \, \varphi \rightarrow (\partial^\alpha\varphi)_{\alpha \in \N^d}. 
$$
This mapping is clearly injective. We now show that it has weak-$\ast$ closed range. Let $(\varphi_j)_{j}$ be a net in $\EE^{ \{\mathfrak{M}\}}(K)$ and $F = (F_\alpha)_{\alpha \in \N^d} \in \Lambda^{\{\mathfrak{M}\}}(L^2(K))$ such that $S^t(\varphi_j) \rightarrow F$ in the weak-$\ast$ topology on  $\Lambda^{\{\mathfrak{M}\}}(L^2(K))$. In particular, $\partial^\alpha \varphi_j \rightarrow  F_\alpha$ in $L^1_{\operatorname{loc}}(\operatorname{int} K)$ for all $\alpha \in \N^d$. Consequently, $\partial^\alpha F_0 = F_\alpha \in L^2(\operatorname{int} K)$ (the derivatives should be interpreted in the sense of  distributions). By the Sobolev embedding theorem \cite[Theorem 4.12 Part II]{adams}, there is $k \in \N$ such that the continuous embedding $H^k(\operatorname{int} K) \rightarrow C(K)$ hodls, where $H^k(\operatorname{int} K)$ denotes the Sobolev space of order $k$. Since $\partial^\alpha F_0  \in H^k(\operatorname{int} K)$ for all $\alpha \in \N^d$, we obtain that $F_0 \in C^\infty(K)$ and 
$$
\| \partial^\alpha F_0 \|_{L^\infty(K)} \leq D \max_{|\beta| \leq k} \| \partial^{\alpha+\beta} F_0 \|_{L^2(\operatorname{int} K)} = D \max_{|\beta| \leq k} \|  F_{\alpha+\beta} \|_{L^2(\operatorname{int} K)}, \qquad   \alpha \in \N^d,
$$
for some $D > 0$. Pick $0 < h \leq 1$ and $n \in \N$ such that $F \in  \Lambda^{M_n, h}(L^2(K))$. Condition $\{ \mathfrak{M}.2\}'$ implies  that there are $m \in \N$ and $C,H > 0$ such that $\max_{0 \leq j \leq k}M^n_{p+j} \leq CH^pM^m_p$ for all $p \in \N$. Hence,
$$
\| F_0 \|_{\mathcal{E}^{M_m, h/H}(K)} \leq DCh^{-k} \| F\|_{ \Lambda^{M_n, h}(L^2(K))}.
$$
This shows that $F_0 \in \mathcal{E}^{\{\mathfrak{M}\}}(K)$ and thus $F = (F_{\alpha})_{\alpha \in \N^d} =  (\partial^{\alpha}F_0)_{\alpha \in \N^d}  \in \operatorname{Im} S^t$.
 \end{proof}
\begin{remark}
One can also use the dual Mittag-Leffler lemma \cite[Lemma 1.4]{Komatsu} in the same way as in the proof of \cite[Theorem 3.2$(ii)$]{D-P-P-V}  (see also the proof of \cite[Proposition 8.6]{Komatsu}) to show Proposition \ref{vstkln135}. 
\end{remark}
\begin{proof}[Proof of Theorem \ref{main-2}]
The first statement  is a  consequence of Lemma \ref{algebraic}$(i)$. Moreover, it is clear that for each $N \in V(\mathfrak{M})$ the seminorm $\| \, \cdot \, \|_{\mathcal{E}^{N,1}(K)}$ is continuous on  $\EE^{\{\mathfrak{M}\}}(K)$. We now show that for every seminorm $q$ on $\EE^{\{\mathfrak{M}\}}(K)$ there are $N \in V(\mathfrak{M})$ and $C > 0$ such that
$$
q(\varphi) \leq C \| \varphi\|_{\mathcal{E}^{N,1}(K)}, \qquad \varphi \in \mathcal{E}^{\{\mathfrak{M}\}}(K).
$$
Choose an equicontinuous subset $B$ of $\EE'^{\{\mathfrak{M}\}}(K)$ such that
$$
q(\varphi) = \sup_{T \in B} |\langle T, \varphi \rangle|, \qquad \varphi \in \mathcal{E}^{\{\mathfrak{M}\}}(K).
$$
By Proposition \ref{vstkln135}, there exist $N \in V(\mathfrak{M})$ and $C > 0$ such that for each $T \in B$ there is a family $\{F_{\alpha,T}\in L^2(K)|\,\alpha\in\N^d\}$ satisfying \eqref{kvstld135} and \eqref{stvknr157}. Set $L = (N_p/2^p)_{p \in \N} \in V(\mathfrak{M})$.  For all $\varphi \in \mathcal{E}^{\{\mathfrak{M}\}}(K)$ it holds that
\begin{align*}
&q(\varphi) \leq \sup_{T \in B} \sum_{\alpha\in\N^d} \int_K |F_{\alpha,T}(x)\|\partial^{\alpha}\varphi(x)|dx \leq \sup_{T \in B}  \sum_{\alpha\in\N^d}  \| F_{\alpha,T}\|_{L^2(K)} \| \partial^{\alpha} \varphi \|_{L^2(K)} \\
&\leq C|K|^{1/2} \sum_{\alpha \in \N^d} \frac{ \| \partial^{\alpha} \varphi \|_{L^\infty(K)}}{N_\alpha} \leq 2^dC|K|^{1/2}\| \varphi\|_{\mathcal{E}^{L,1}(K)}.
\qedhere
\end{align*}
\end{proof}


\begin{thebibliography}{999}
\setlength{\itemsep}{0pt}

\bibitem{adams} R.~A.~Adams, J.~J.~F.~Fournier, \emph{Sobolev spaces}, Elsevier/Academic Press, Amsterdam, 2003.

\bibitem{B-M-T} R.~W.~Braun, R.~Meise, B.~A.~Taylor, \emph{Ultradifferentiable functions and Fourier analysis}, Results Math. \textbf{17} (1990), 206--237.

\bibitem{B-B} A.~Brudnyi, Y.~Brudnyi, \emph{Methods of geometric analysis in extension and trace problems}, Vol 1, Birkh\"auser/Springer Basel AG, Basel, 2012.

\bibitem{D-V} A.~Debrouwere, J.~Vindas, \emph{Solution to the first Cousin problem for vector-valued quasianalytic
functions}, Ann. Mat. Pura Appl. \textbf{196} (2017), 1983--2003.

\bibitem{D-P-P-V} P.~Dimovski, S.~Pilipovi\'c, B.~Prangoski, J.~Vindas, \emph{Convolution of ultradistributions and ultradistribution spaces associated to translation-invariant Banach spaces}, Kyoto J. Math. \textbf{56} (2016), 401--440.


\bibitem{H-M} T.~Heinrich, R.~Meise, \emph{A support theorem for quasianalytic functionals}, Math. Nachr. \textbf{280} (2007), 364--387.

\bibitem{Komatsu} H.~Komatsu, \emph{Ultradistributions I. Structure theorems and a characterization}, J. Fac. Sci. Univ. Tokyo Sect. IA Math. \textbf{20} (1973), 25--105.

\bibitem{Komatsu3} H.~Komatsu, \emph{Ultradistributions. III. Vector-valued ultradistributions and the theory of kernels}, J. Fac. Sci. Univ. Tokyo Sect. IA Math. \textbf{29} (1982), 653--717.

\bibitem{lee} J.~M.~Lee, \emph{Introduction to smooth manifolds}, Springer, New York, 2003.

\bibitem{M-V} R.~Meise, D.~Vogt, \emph{Introduction to functional analysis}, The Clarendon Press, Oxford University Press, New York, 1997.

\bibitem{R-S} A.~Rainer, G.~Schindl, \emph{Composition in ultradifferentiable classes}, Stud. Math. \textbf{224} (2014), 97--131.

\bibitem{whitney1} H.~Whitney, \emph{Analytic extensions of differentiable functions defined in closed sets}, Trans. Amer. Math. Soc. \textbf{36} (1934), 63--89.

\bibitem{whitney2} H.~Whitney, \emph{Functions differentiable on the boundaries of regions}, Ann. of Math. \textbf{35} (1934), 482--485.

\bibitem{whitney3} H.~Whitney, \textit{On the extension of differentiable functions},  Bull. Amer. Math. Soc. \textbf{50} (1944), 76--81.




\end{thebibliography}
\end{document}